\newcommand{\vect}{{\operatorname {vec}}}
\newcommand{\vech}{{\operatorname{vech}}}
\newcommand{\vechinv}{{\operatorname{vech}^{-1}}}
\newcommand{\trace}{{\mathrm{Tr }}}
\newcommand{\RR}{\mathbb{R}}
\newcommand{\CC}{\mathbb{C}}
\newcommand{\LLL}{\mathcal{L}}
\newcommand{\diag}{\operatorname{diag}}
\newcommand{\norm}[1]{||#1||}
\newtheorem{theorem}{Theorem}[section]
\newtheorem{lemma}[theorem]{Lemma}
\theoremstyle{definition}
\newtheorem{definition}[theorem]{Definition}
\newtheorem{thm}{Theorem}
\title[Convergence of SCF using a density matrix approach] 
      { A density matrix approach to   the convergence  of the self-consistent field iteration}
\author[Parikshit Upadhyaya, Elias Jarlebring, Emanuel Rubensson]{}
 \keywords{self-consistent field iteration, convergence analysis, nonlinear eigenvalue problems, eigenvector nonlinearity, electronic structure calculations, iterative methods}
 \email{pup@kth.se}
 \email{eliasj@kth.se}
 \email{emanuel.rubensson@it.uu.se}
\begin{document}
\maketitle

\centerline{\scshape Parikshit Upadhyaya}
\medskip
{\footnotesize
 \centerline{Lindstedtsv\"agen 25,}
   \centerline{Department of Mathematics,}
   \centerline{SeRC - Swedish e-Science research center,}
   \centerline{Royal Institute of Technology, SE-11428 Stockholm, Sweden}
} 

\medskip

\centerline{\scshape Elias Jarlebring}
\medskip
{\footnotesize
 \centerline{Lindstedtsv\"agen 25,}
 \centerline{Department of Mathematics,}
   \centerline{SeRC - Swedish e-Science research center,} 
   \centerline{Royal Institute of Technology, SE-11428 Stockholm, Sweden}
}

\medskip
\centerline{\scshape Emanuel H. Rubensson}
\medskip
{\footnotesize
   \centerline{Division of Scientific Computing, Department of Information Technology,}
   \centerline{Uppsala University, Box 337, SE-75105 Uppsala, Sweden}
}



\begin{abstract}
In this paper, we present a local convergence analysis of the self-consistent field (SCF) iteration using the density matrix as the state of a fixed-point iteration. Sufficient and almost necessary conditions for local convergence are formulated in terms of the spectral radius of the Jacobian of a fixed-point map. The relationship between convergence and certain properties of the problem is explored by deriving upper bounds expressed in terms of higher gaps. This gives more information regarding how the gaps between eigenvalues of the problem affect the convergence, and hence these bounds are more insightful on the convergence behaviour than standard convergence results. We also provide a detailed analysis to describe the difference between the bounds and the exact convergence factor for an illustrative example. Finally we present numerical examples and compare the exact value of the convergence factor with the observed behaviour of SCF, along with our new bounds and the characterization using the higher gaps. We provide heuristic convergence factor estimates in situations where the bounds fail to well capture the convergence.
\end{abstract}

\section{Introduction}\label{sect:intro}
Let $A:M\to M$, where  $M\subset\CC^{n\times n}$ denotes the set of Hermitian matrices. In this work we consider the associated nonlinear
eigenvalue problem consisting of determining $(X_1,\Lambda_1)\in\CC^{n\times p}\times \RR^{p\times p}$
such that $(X_1,\Lambda_1)$ is an invariant pair of $A(X_1X_1^H)$, i.e.,
  \begin{subequations}\label{eq:prob}
  \begin{eqnarray}
A(X_1X_1^H)X_1&=&X_1\Lambda_1 ,\\
    X_1^HX_1&=& I,
  \end{eqnarray}
  \end{subequations}
where 
$\Lambda_1=\diag(\lambda_1,\ldots,\lambda_p)$ and
$\lambda_1,\ldots,\lambda_n$ are
the eigenvalues of $A(X_1X_1^H)$, numbered in ascending order.
%
This is one of the fundamental computational challenges in quantum chemistry and related fields (Hartree-Fock and Kohn-Sham density functional theory).\footnote{In these settings, the columns of $X_1$ contain the basis-expansion coefficients of the molecular orbitals.} However, this problem also arises as a trace ratio maximization problem in linear discriminant analysis for dimension reduction. See \cite{ngosaadtrp},\cite{zhangliaotrp} and \cite{zhanganotetrp} for more on this application. The self-consistent field (SCF) iteration consists of computing iterates satisfying
the linear eigenvalue problem
\begin{equation} \label{eq:scf1}
   A(V_kV_k^H)V_{k+1}=V_{k+1}S_{k+1}, 
\end{equation}
where $V_{k+1}^HV_{k+1}=I$ and $S_{k+1}\in\RR^{p\times p}$ is diagonal. In the case of convergence, $V_{k} \to X_1$ and $S_k \to \Lambda_1$.  In this paper we provide a local convergence analysis of this algorithm. SCF is rarely used on its own as a solution method and most of the state-of-the-art procedures are based on its enhancements and improvements, for example, Pulay's DIIS (Direct Inversion in the Iterative Subspace) acceleration in \cite{Pulay:1979}. See also standard references \cite{Szabo:1996:QC},\cite{Helgaker:Jorgensen:Olsen} and further literature discussion below.

There is an extensive amount of literature on the convergence
of the SCF iteration and its variants.
We mention some main approaches to the convergence theory,
without an ambition of a complete description. A number of recent
works are based on the optimization viewpoint, e.g.,
\cite{Cances:2000:SCF,Levitt:2012:CONVERGENCE,liu2014convergence,liu2015analysis}. This is natural, since the problem in \eqref{eq:prob} often stems from the first order optimality condition of an energy minimization problem, as in \cite[Section~2.1]{Saad:2010:ELECSTRUCT}. In particular, the Roothaan algorithm with level-shifting and damping are studied in \cite{Cances:2000:SCF}. This analysis was used as a basis for the gradient analysis in \cite{Levitt:2012:CONVERGENCE}, which provided
explicit estimates of the convergence rate for the algorithms applied to the  Hartree-Fock equations. The convergence of the DIIS acceleration scheme has been studied separately in \cite{ROHWEDDER:2011}.

Various approaches are based on measuring the subspace angle and
other using chordal norms, e.g.,  \cite{liu2014convergence,Cai:2017:EIGVEC},
leading to local convergence as well as a global convergence analysis.
In contrast to these approaches, we use a density matrix
based analysis and derive bounds involving higher gaps (as we explain below).
The analysis in \cite{liu2015analysis} provides precise
conditions for local convergence (and some global convergence conditions),
under the assumption that $A$ only depends on the diagonal of the
density matrix $X_1X_1^H$, which in many discretization settings corresponds to the charge density. Not all
problems are nonlinear only in the diagonal of the density matrix, as e.g., 
the example in Section~\ref{subsec:watermolecule}.
The work in \cite{Yang:2009:SCF} also provides a convergence
analysis, mostly based on
a non-zero temperature filter function approach. 
We note also that
a precise local convergence criterion for the classical version
was presented in \cite{Stanton:1981:CLOSEDSHELL}, not
involving a density matrix analysis. 

A model of interacting bosons which has received considerable attention is the Gross-Pitaevskii equation. This corresponds to \eqref{eq:prob} with $p = 1$. Convergence results for the SCF iteration
for this case can be found for example in \cite{Bai_LV:2017}.

Our convergence analysis is focused on establishing a precise characterization of
the convergence factor as well as natural upper bounds. We provide an exact formula for the 
convergence factor, which turns out to be the spectral radius of a matrix (the Jacobian of the fixed-point map). Using this exact formula,
we derive upper bounds which can be phrased in terms of higher gaps (as we define later in Definition~\ref{def:hogap}) and the action
of a linear operator on the outer products of eigenvectors. We also provide an example where the convergence cannot be characterized based 
on the first gap alone, which illustrates the importance  of taking into account the higher gaps in the convergence analysis. This should be viewed in contrast to the analysis in \cite{liu2014convergence,Cai:2017:EIGVEC,Yang:2009:SCF}, which is primarily focused on the first gap.

We will use a formulation of the SCF iteration in terms of the density matrix. In our context, a density matrix
is defined by 
\[
  P_{k}:=V_{k}V_{k}^H\in M.
\]
Given $P_k$ we can compute $A(P_k)=A(V_kV_k^H)$ from
which we can compute $V_{k+1}$ and in 
principle construct $P_{k+1}=V_{k+1}V_{k+1}^H$. Hence,
the iteration \eqref{eq:scf1} is equivalent
to a fixed point iteration in $P_k$. We will
refer to this fixed point map as $\Psi$, i.e.,
\begin{equation}\label{eq:psifp}
  P_{k+1}=\Psi(P_k).
\end{equation}
Although our conclusions hold for general problems, we restrict our analysis to the case where the operator $A$ has the form
\begin{equation}\label{eq:ap}
A(P)=A_0+\LLL(P),
\end{equation}
where $A_0\in M$ and $\LLL:\CC^{n\times n}\to \CC^{n\times n}$ is a complex linear operator, i.e., $\LLL(zA)=z\LLL(A)$ for all $z\in\CC$. The density matrix formulation in \eqref{eq:psifp} is the starting point of several linear scaling variants of the SCF-algorithm that avoid explicit construction of $V_{k+1}$\cite{BOWLER:MIZAYAKI}.  
Before we proceed to the next section, we will introduce some necessary notation and definitions. 

Let $X,\Lambda$ correspond to a complete eigenvalue
decomposition of $A$ evaluated in a solution to \eqref{eq:prob}, i.e., 
\[
  A(X_1X_1^H)X=X\Lambda,
\]
where
\begin{equation}\label{eq:lambda}
 \Lambda=\begin{bmatrix}\Lambda_1 & \\ & \Lambda_2\end{bmatrix}
=\diag(\lambda_1,\ldots,\lambda_p,\lambda_{p+1},\ldots,\lambda_n)
\end{equation}
and
\[
 X=[X_1\;\;X_2]=[x_1\;\;\ldots\;\;x_n].
\]

\begin{definition}[Gap]
The smallest distance between the diagonal elements
of $\Lambda_1$ and the diagonal elements of $\Lambda_2$ is defined as 
as the \emph{gap} and denoted $\delta$.
\end{definition}
Due to the numbering of eigenvalues, the gap is given by
\[
  \delta=\min_{i\le p,\;j\ge p+1}|\lambda_i-\lambda_j|=\lambda_{p+1}-\lambda_p.
\]
Throughout this paper, we assume that $\delta \neq 0$. Otherwise the decomposition \eqref{eq:lambda} is not unique and violates the common uniform well-posedness hypothesis of \cite{Cances:2000:SCF}.
\begin{definition}[Higher gap]\label{def:hogap}
The $j$-th smallest distance between the diagonal elements of $\Lambda_1$ and $\Lambda_2$ is denoted by $\delta_j$. 
\end{definition}
Note that $\delta_1 = \delta$. As an example, the second gap is given by
\[
 \delta_2=\underset{(i,j)\neq (p,p+1)}{\min_{i\le p,\;j\ge p+1}}|\lambda_i-\lambda_j|
=\min(\lambda_{p+2}-\lambda_p,\lambda_{p+1}-\lambda_{p-1}).
\]
\begin{definition}[Lower triangular vectorization]
Let $m = n(n+1)/2$. The operator $\mathit{\vech} \colon M \to \CC^{m}$ is defined as
\begin{equation*}
  \vech(W) = \begin{bmatrix}
              w_{1,1}&
              \cdots&
              w_{n,1}&
              w_{2,2}&
              \cdots &
              w_{n,2}&
              w_{3,3}&
              \cdots &
              w_{n,n}
              \end{bmatrix}^T.
\end{equation*}
\end{definition}
This is the vectorization operator adapted for hermitian matrices, and returns the vectorizaton of the lower triangular part. 
Similarly, we define the inverse operator $\vech^{-1} \colon \CC^{m} \to M$ which maps any vector $v \in \CC^m$ to a corresponding  $W \in M$.  The relation between $\vect$ and $\vech$ is
given by 
\begin{equation}\label{eq:vvech}
\vech(W)=T\vect(W),
\end{equation}
where $T\in\RR^{m\times n^2}$. The matrix $T$ is in general non-unique (as discussed in \cite{McCull:SymMat} and \cite{Henderson:vech}). In this paper, we will specifically use \eqref{eq:vvech} with
\begin{equation*}
T = \diag(I_n,\begin{bmatrix}0& I_{n-1}\end{bmatrix},\begin{bmatrix}0& 0& I_{n-2}\end{bmatrix},\cdots,1).
\end{equation*}
\section{Convergence characterization}
\subsection{Main theory}
The following characterization involves the matrix
consisting of reciprocal gaps, which we denote  $R\in\RR^{n\times n}$,
and is given by
\begin{equation}\label{eq:Rdef}
R_{i,j}=
\begin{cases}
\frac{1}{\lambda_j(0)-\lambda_i(0)},\;\;& \textrm{ if }i\le p \textrm{ and } j>p\\
\frac{1}{\lambda_i(0)-\lambda_j(0)},\;\;& \textrm{ if }i> p \textrm{ and } j\le p\\
0&\textrm{ otherwise}.
\end{cases}
\end{equation}
where $\lambda_1(t),\ldots,\lambda_n(t)$ are eigenvalues
of a parameter dependent matrix $B(t)$.\footnote{Note that the matrix $R$ has appeared with different names in other papers before. For example, in \cite{Stanton:1981:CLOSEDSHELL}, it is referred to as the "density perturbation". In \cite{liu2015analysis}, it is called "first divided difference matrix".} The matrix $R$ is symmetric with the following structure:
\begin{equation*}
  R = \begin{bmatrix}0 &R_p^T\\R_p& 0\end{bmatrix},
\end{equation*}
where $R_p\in \mathbb{R}^{(n-p)\times p}$. We need the following perturbation result whose variants exist in quantum mechanical perturbation theory, for example in chapter 15.III of \cite{Messiah1999}.

\begin{lemma}[Density matrix derivatives]\label{thm:densitymatder}
Consider a matrix-valued function $B$ depending on a complex parameter such that $B(t) = B_0+B_1t$, where $B_0,B_1$ are Hermitian.
Let $X,\Lambda$ correspond to a parameter dependent diagonalization for a sufficiently small neighborhood of  $t_0=0$, i.e., 
\begin{equation*}
  X(t)\Lambda(t)X(t)^H=B(t),
\end{equation*}
with $X(t)^HX(t)=I$ 
and $\Lambda(0)=\operatorname{diag}(\lambda_1(0),\ldots,\lambda_n(0))$,
where $\lambda_1(0)<\cdots< \lambda_p(0)<\lambda_{p+1}(0)<\cdots< \lambda_n(0)$. 
Let $X(t)\in\CC^{n\times n}$
be decomposed as
$X(t)=[X_1(t),X_2(t)]$,
where  $X_1(t)\in\CC^{n\times p}$
and $P(t):=X_1(t)X_1(t)^H$.
Then, 
\begin{equation*}
 \vect(P'(0))= -(\overline{X(0)}\otimes X(0))D(X(0)^T\otimes X(0)^H)\vect(B_1)
\end{equation*}
where
\begin{equation}\label{eq:Ddef}
D:=\diag(\vect(R)).  
\end{equation}
\end{lemma}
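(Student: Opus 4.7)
The plan is to use first-order perturbation theory to compute $X'(0)$ in the unperturbed eigenbasis and then to assemble $P'(0)$ using the Kronecker identity $\vect(ACB)=(B^T\otimes A)\vect(C)$. First, write $X(t)=X(0)Y(t)$ with $Y(0)=I$, so that $K:=Y'(0)=X(0)^HX'(0)$ encodes the first-order response of the eigenvectors. Differentiating the orthonormality condition $X(t)^HX(t)=I$ at $t=0$ immediately gives $K^H=-K$, i.e., $K$ is anti-Hermitian.

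Next, I would extract $K$ from the eigenvalue equation. Differentiating $B(t)X(t)=X(t)\Lambda(t)$ at $t=0$, premultiplying by $X(0)^H$, and writing $\tilde B_1:=X(0)^HB_1X(0)$, one obtains
\[
\tilde B_1+\Lambda(0)K-K\Lambda(0)=\Lambda'(0).
\]
The strict separation of the eigenvalues lets me read off $K_{ij}=(\tilde B_1)_{ij}/(\lambda_j(0)-\lambda_i(0))$ for all $i\neq j$. The diagonal entries $K_{ii}$ are not pinned down by this identity; they encode the usual eigenvector phase freedom, and I will need to verify that they drop out of $P'(0)$.

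Now write $P(t)=X(0)Q(t)X(0)^H$ with $Q(t)=Y(t)\diag(I_p,0)Y(t)^H$. A direct computation using $Y(0)=I$ yields the block form
\[
Q'(0)=\begin{bmatrix}K_{11}+K_{11}^H & K_{21}^H\\ K_{21} & 0\end{bmatrix},
\]
and the $(1,1)$ block vanishes because $K_{11}$ is itself anti-Hermitian, which is the precise mechanism that removes the phase ambiguity. Inspecting the off-diagonal blocks entry by entry, the two cases $i\le p<j$ and $j\le p<i$ both collapse to $Q'(0)_{ij}=-R_{ij}(\tilde B_1)_{ij}$ after accounting for the sign convention in \eqref{eq:Rdef}. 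Hence $Q'(0)=-R\odot\tilde B_1$, where $\odot$ denotes the Hadamard product.

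Finally, vectorizing and using $\vect(R\odot\tilde B_1)=\diag(\vect(R))\vect(\tilde B_1)=D\vect(\tilde B_1)$ together with the Kronecker identity applied to $\tilde B_1=X(0)^HB_1X(0)$ and to $P'(0)=X(0)Q'(0)X(0)^H$ yields the claimed formula. The main obstacle is the verification carried out in the third paragraph: one must check that the block $K_{11}+K_{11}^H$ vanishes so that the undetermined phases are irrelevant, and that the entries $K_{ij}$ and $\overline{K_{ji}}$ combine consistently so that both off-diagonal blocks of $Q'(0)$ are captured by a single Hadamard product with $R$.
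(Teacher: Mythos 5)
Your proof is correct, and it takes a genuinely different route from the paper's. The paper sidesteps eigenvector perturbations entirely: it rewrites $P(t)=h(B(t))$ where $h$ is the Heaviside-type spectral filter in \eqref{eq:heavyside}, interprets $h$ as a matrix function, and applies the Dalecki{\u\i}--Kre{\u\i}n theorem, which directly yields the Fr\'echet derivative as a Hadamard product with the divided-difference matrix $U=-R$. You instead carry out classical first-order eigenvector perturbation theory: extract the anti-Hermitian generator $K=X(0)^HX'(0)$, solve the Sylvester-type commutator equation $\tilde B_1+\Lambda(0)K-K\Lambda(0)=\Lambda'(0)$ for the off-diagonal entries of $K$, build $Q'(0)=KE+EK^H$, and verify block by block that the diagonal-block contribution $K_{11}+K_{11}^H$ vanishes while the off-diagonal blocks assemble into $-R\odot\tilde B_1$. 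The computations you sketch all check out (in particular, Hermiticity of $\tilde B_1$ is what makes the $(1,2)$ and $(2,1)$ blocks combine into a single Hadamard product, and anti-Hermiticity of $K$ is what kills the phase ambiguity), and the vectorization step is identical.

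What each approach buys: the paper's matrix-function route is insensitive to possible degeneracies among the occupied (or among the unoccupied) eigenvalues, since only the gap $\delta$ is needed for $h(B(t))$ to be smooth; it also generalizes immediately to smooth filters such as the Fermi--Dirac distribution, which the paper exploits in the remark after the lemma. Your route is more elementary and self-contained --- it requires no appeal to Fr\'echet calculus or Dalecki{\u\i}--Kre{\u\i}n --- and it makes explicit the cancellation of the eigenvector phase/gauge freedom, which the matrix-function approach hides. The price is that you implicitly rely on the full simplicity of the spectrum (which the lemma's hypotheses do grant) to make $X(t)$ differentiable and $K_{ij}$ well defined for all $i\ne j$, even though only the $i\le p<j$ entries ultimately matter. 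Both proofs are valid under the stated hypotheses.
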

\begin{proof}
Let us consider the domain $\mathcal{D} = \left(-\infty,\lambda_{mid}\right)\cup \left(\lambda_{mid},\infty\right)$, where $\lambda_{mid} = \frac{\lambda_p(0)+\lambda_{p+1}(0)}{2}$ and define the step function $h:\mathcal{D}\to \mathbb{R}$, which acts as a filter for the $p$ smallest eigenvalues of $B(0) = B_0$:
\begin{equation}\label{eq:heavyside}
h(x) = \begin{cases}1,\quad x<\lambda_{mid}\\0,\quad x>\lambda_{mid}\end{cases}.
\end{equation}
We can generalize $h:M\to M$ as a matrix function (in the sense of \cite{HIGHAMFOM}) and rewrite the density matrix function as
\begin{equation}\label{eq:densityheavy}
P(t) = X_1(t)X_1(t)^H = X(t)\begin{bmatrix}I_p& 0\\ 0& 0\\ \end{bmatrix}X(t)^H = X(t)h(\Lambda(t))X(t)^H = h(B(t)),
\end{equation}
where we assume that $t$ lies in a sufficiently small neighbourhood of $t_0=0$ such that $\lambda_p(t)<\lambda_{mid}<\lambda_{p+1}(t)$.
Then,
\begin{eqnarray}\label{eq:dfrechet}
P'(0) =& \lim\limits_{\epsilon\to 0} &\frac{h(B(\epsilon))-h(B(0))}{\epsilon} \nonumber\\
      =& \lim\limits_{\epsilon\to 0, \norm{E}\to 0} &\frac{X(0)\Big(h(\Lambda(0)+X(0)^HEX(0))-h(\Lambda(0))\Big)X(0)^H}{\epsilon},
\end{eqnarray}
where 
\begin{equation}\label{eq:eb0}
E = \epsilon B_1.
\end{equation}
If we denote by $L(F,G)$ the Fr\'echet derivative of $h$ evaluated at $F$ and applied to $G$, then from \eqref{eq:dfrechet}, we get
\begin{align}\label{eq:dfrechet2}
P'(0) &=& \lim\limits_{\epsilon\to 0, \norm{E}\to 0} &\frac{X(0)\Big(L\big(\Lambda(0),X(0)^HEX(0)\big)+O(\norm{E}^2)\Big)X(0)^H}{\epsilon}\nonumber\\
      &=& \lim\limits_{\epsilon\to 0, \norm{E}\to 0} &\frac{X(0)L\big(\Lambda(0),X(0)^HEX(0)\big)X(0)^H}{\epsilon}\nonumber\\
      &=& &X(0)L\big(\Lambda(0),X(0)^HB_1X(0)\big)X(0)^H&
\end{align}
The last equation is a consequence of \eqref{eq:eb0} and the fact that $O(\norm{E}^2)/\epsilon$ goes to zero as $\epsilon$ goes to zero and $L$ being linear in the second argument. From the Dalecki{\u i}-Kre{\u i}n theorem\cite[Theorem ~3.11]{HIGHAMFOM},\cite{daleckiikrein}, we have
\begin{equation}\label{eq:Lfrechet}
L(\Lambda(0),X(0)^HB_1X(0)) = U\circ (X(0)^HB_1X(0)),
\end{equation}
where $U$ denotes the matrix of divided differences
\begin{equation*}
U := \begin{cases}\frac{h(\lambda_i(0))-h(\lambda_j(0))}{\lambda_i(0)-\lambda_j(0)}&i \neq j\\h'(\lambda_i(0))&i=j.\end{cases}
\end{equation*}
By definition of $h$ in \eqref{eq:heavyside} and from \eqref{eq:Rdef}, we see that $U$ reduces to $-R$.
Vectorizing \eqref{eq:dfrechet2} and using \eqref{eq:Lfrechet} leads us to
\begin{align}
\vect(P'(0)) &= -\vect\Big(X(0)\big(R\circ (X(0)^HB_1X(0))\big)X(0)^H\Big)\\ \nonumber
             &= -(\overline{X(0)}\otimes X(0))D(X(0)^T\otimes X(0)^H)\vect(B_1),
\end{align} 
which is a result of repeated application of the matrix product vectorization identity $\vect(LMN) = (N^T\otimes L)\vect(M)$.
\end{proof}
In practice, the computation of the density matrix as in \eqref{eq:psifp} can be done by using \eqref{eq:densityheavy}. Moreover, in SCF iterations, variations of the problem can be solved, where an approximation of the step function is used. A common choice for this approximation is the Fermi-Dirac distribution: $f_{\mu,\beta}(t) = \frac{1}{1+e^{\beta(t-\mu)}}$, where the parameter $\mu$ is usually selected such that $\trace(P(0)) = p$. The function $f_{\mu,\beta}$ tends to the step function in the limit $\beta \to 0$.
Note that Lemma \ref{thm:densitymatder} can be generalized for such an approximation of the density matrix $P_f$ as follows:
\begin{equation*}
\vect(P'_f(0)) = (\overline{X(0)}\otimes X(0))D_f(X(0)^T\otimes X(0)^H)\vect(B'(0)),
\end{equation*}
where $D_f = \diag(\vect(R_f))$ and 
\begin{equation*}
R_f = \begin{cases}\frac{f(\lambda_i(0))-f(\lambda_j(0))}{\lambda_i(0)-\lambda_j(0)}&i \neq j\\f'(\lambda_i(0))&i=j\end{cases}.
\end{equation*}

\begin{thm}[Density matrix local convergence]\label{thm:densitymat}
Let $P_*=X_{1*}X_{1*}^H\in\CC^{n\times n}$ be
a fixed point of $\Psi$, i.e., $P_*=\Psi(P_*)$. 
Then, the SCF iteration satisfies 
\[
\vech(P_{k+1}-P_*)=
   \vech(\Psi(P_k)-P_*)=J_{P}\vech(P_k-P_*)+O(\|\vech(P_k-P_*)\|^2)
\]
where
\begin{equation}  
  J_P=-T(\overline{X_*}\otimes X_*)
  D({X_*}^T\otimes {X_*}^H)L'  \label{eq:JPdef}
\end{equation}
and $L'\in\CC^{n^2\times m}$ is defined by
\begin{equation}\label{eq:Lprime}
L'=
\left(\vect(\mathcal{L}(\vech^{-1}(e_1)),\ldots,
\vect(\mathcal{L}(\vech^{-1}(e_{m})))\right),
\end{equation}
$X_* = [X_{1*},X_{2*}]$ and $e_1,e_2,\ldots,e_m$ are the first $m$ columns of the identity matrix $I_n$.
\end{thm}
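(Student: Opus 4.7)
The plan is to view the fixed-point map $\Psi$ as the composition $P \mapsto h(A(P)) = h(A_0 + \mathcal{L}(P))$, where $h$ is the matrix step function introduced in the proof of Lemma \ref{thm:densitymatder}. This identity follows from \eqref{eq:densityheavy} applied with $B = A(P)$, since the SCF update \eqref{eq:scf1} defines $V_{k+1}$ as an orthonormal basis of the invariant subspace of $A(P_k)$ associated with its $p$ smallest eigenvalues, so that $P_{k+1} = V_{k+1}V_{k+1}^H = h(A(P_k))$. Local convergence then reduces to computing the Fr\'echet derivative of this composite smooth map at $P_*$ and expressing it in $\vech$-coordinates.

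Concretely, for any Hermitian direction $\Delta P \in M$ I would parametrize $P(t) = P_* + t\,\Delta P$ and set $B(t) := A(P(t)) = A(P_*) + t\,\mathcal{L}(\Delta P)$. This is exactly of the affine form $B_0 + B_1 t$ required by Lemma \ref{thm:densitymatder}, with $B_0 = A(P_*)$ (whose spectrum satisfies $\lambda_p < \lambda_{p+1}$ by the standing gap hypothesis) and $B_1 = \mathcal{L}(\Delta P)$. Applying the lemma yields
\[
\vect\bigl(\tfrac{d}{dt}\Psi(P_* + t\,\Delta P)\big|_{t=0}\bigr)
= -(\overline{X_*}\otimes X_*)\,D\,(X_*^T \otimes X_*^H)\,\vect(\mathcal{L}(\Delta P)).
\]

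To translate to the $\vech$-coordinates used in the statement, I apply \eqref{eq:vvech} on the left to obtain $\vech(\,\cdot\,) = T\vect(\,\cdot\,)$. On the right, the definition \eqref{eq:Lprime} of $L'$ together with linearity of $\mathcal{L}$ and the identity $\vech^{-1}(\vech(\Delta P)) = \Delta P$ (valid since $\Delta P \in M$) gives $\vect(\mathcal{L}(\Delta P)) = L'\,\vech(\Delta P)$. Composing these identifications produces $\vech(D\Psi(P_*)[\Delta P]) = J_P\,\vech(\Delta P)$, with $J_P$ exactly as in \eqref{eq:JPdef}. Taylor's theorem applied to $\Psi$, which is smooth in a neighbourhood of $P_*$ thanks to the spectral gap (the step function $h$ is constant on each connected component of $\mathcal{D}$, so $h \circ A$ is real-analytic as long as the spectrum of $A(P)$ stays away from $\lambda_{mid}$), then delivers the $O(\|\vech(P_k - P_*)\|^2)$ remainder.

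The main obstacle I anticipate is the careful bookkeeping between G\^ateaux and Fr\'echet derivatives: Lemma \ref{thm:densitymatder} is phrased for a one-parameter family, so I need to verify that the resulting directional derivative is genuinely linear in $\Delta P$ (which it is, since $\mathcal{L}$ is linear and the lemma's output depends linearly on $B_1$) and that the quadratic remainder is uniform in the direction, so that it assembles into a bona fide Fr\'echet expansion in $\vech$-coordinates. Once these technicalities are settled, the remainder of the argument is purely algebraic, relying only on the Kronecker identity $\vect(LMN) = (N^T \otimes L)\vect(M)$ and the definitions of $T$ and $L'$.
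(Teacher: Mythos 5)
Your argument takes essentially the same route as the paper's: both apply Lemma~\ref{thm:densitymatder} to the affine pencil $B(t)=A(P_*)+t\,\mathcal{L}(\cdot)$ and then translate to $\vech$-coordinates using $T$ and $L'$. The paper assembles the Jacobian column-by-column in the directions $\vech^{-1}(e_j)$, whereas you differentiate in an arbitrary Hermitian direction $\Delta P$ and factor out $\vech(\Delta P)$; this is the same bookkeeping, and your remarks about linearity in $B_1$ and uniformity of the quadratic remainder are precisely the unstated hypotheses the paper's Taylor step relies on.

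One point to make explicit, however. The identity you invoke, $\vect(\mathcal{L}(\Delta P))=L'\vech(\Delta P)$, does \emph{not} follow from $\vech^{-1}(\vech(\Delta P))=\Delta P$ alone, because $\vech^{-1}$ (mapping into the real vector space $M$) is only $\RR$-linear, not $\CC$-linear. Unwinding \eqref{eq:Lprime} by $\CC$-linearity of $\mathcal{L}$ and $\vect$ gives $L'\vech(\Delta P)=\vect\bigl(\mathcal{L}(S(\Delta P))\bigr)$, where $S$ is the symmetrization operator the paper introduces in \eqref{eq:symmet}, and $S(\Delta P)=\Delta P$ only when the strict lower triangle of $\Delta P$ is real (since $S(L+D+L^H)=L+D+L^T$). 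So the closed-form $J_P=-T(\overline{X_*}\otimes X_*)D(X_*^T\otimes X_*^H)L'$ represents the true derivative in a complex direction only when $\mathcal{L}\circ S=\mathcal{L}$, which happens to hold in all of the paper's examples (e.g.\ when $\mathcal{L}$ depends only on $\diag(P)$, or when the data are real). The paper's own proof carries the identical implicit restriction, since it probes $J_P$ only along the real directions $\vech^{-1}(e_j)$, so your proposal is faithful to the source; but the step where $\vech^{-1}$ is tacitly treated as $\CC$-linear is exactly where a reader should pause.
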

\begin{proof}
Applying the operator $\vech(\cdot)$ to our iteration, we get
\begin{equation*}
  \begin{aligned}
  \vech(P_{k+1}) &= \vech(\Psi(P_k))\\
                 &= \vech(\vect^{-1}(\vect(\Psi(\vech^{-1}(\vech(P_k)))))).
  \end{aligned}
\end{equation*}
Hence, the fixed point iteration can be re-written as
\begin{equation*}
\vech(P_{k+1}) = f(\vech(P_{k})),
\end{equation*} 
where $f: \mathbb{R}^{m} \to \mathbb{R}^{m}$,
\begin{equation*}
  f(v) = \vech(\vect^{-1}(\vect(\Psi(\vech^{-1}(v)))))\quad \forall v \in \mathbb{R}^m.
\end{equation*}
A Taylor expansion around the fixed-point $\vech(P_*)$ gives us,
\begin{equation*}
  \vech(P_{k+1}-P_*) = J_P\vech(P_k-P_*) + O(\|\vech(P_k-P_*)\|^2),
\end{equation*}
where $J_P$ is the Jacobian of $f$ evaluated in $\vech(P_*)$.
The $j$-th column of $J_P$ is given by
\begin{equation}\label{eq:jach}
\begin{aligned}
J_P(:,j) &= \lim_{\epsilon \to 0} \frac{f(\vech(P_*)+\epsilon e_j) - f(\vech(P_*))}{\epsilon}\\
         &= \lim_{\epsilon \to 0} \frac{\vech(\vect^{-1}(\vect(\Psi(P_*+\epsilon \vech^{-1}(e_j))-\Psi(P_*))))}{\epsilon}.\\
\end{aligned}
\end{equation}
By using linearity of the vectorization operators and of $\mathcal{L}$,
we can now invoke Lemma~\ref{thm:densitymatder} with $B(\alpha) = A+\mathcal{L}(P_*+\alpha \vech^{-1}(e_j))$ to get
\begin{multline*}
  \lim_{\epsilon \to 0} \frac{\vect(\Psi(P_*+\epsilon \vech^{-1}(e_j))-\Psi(P_*))}{\epsilon} =\\
  (\overline{X_*}\otimes X_*)D({X_*}^T\otimes {X_*}^H)\vect(\mathcal{L}(\vech^{-1}(e_j))).
\end{multline*}
Using this and equation \eqref{eq:jach}, we have
\begin{equation}
\begin{aligned}
J_P(:,j) &= \vech(\vect^{-1}((\overline{X_*}\otimes {X_*})D({X_*}^T\otimes {X_*}^H)\vect(\mathcal{L}(\vech^{-1}(e_j)))))\\
&= T(\overline{X_*}\otimes {X_*})D({X_*}^T\otimes {X_*}^H)\vect(\mathcal{L}(\vech^{-1}(e_j))). \label{eq:JPX}
\end{aligned}
\end{equation}
Due to the fact that 
$\vect(\mathcal{L}(\vech^{-1}(e_j)))$ is the only component in \eqref{eq:JPX} depending
on $j$, we obtain \eqref{eq:JPdef} by factorizing the matrix $T(\overline{X_*}\otimes {X_*})D({X_*}^T\otimes {X_*}^H)$.
\end{proof}
\newpage
\section{Convergence factor bounds and their interpretation}
\subsection{Spectral-norm bounds}
Since \eqref{eq:psifp} is a nonlinear fixed-point map and the Jacobian is evaluated at a fixed point in Theorem~\ref{thm:densitymat}, the convergence factor is
\begin{equation*}
  c=\rho(J_P) 
\end{equation*}
where $\rho(J_P)$ denotes the spectral radius of $J_P$. 
Moreover, a sufficient and almost necessary condition for local convergence is $c < 1$.
Due to the fact that the spectral radius is smaller
than any operator norm, we have in particular for
the spectral norm:
\begin{equation*}
  c\leq \norm{J_P}_2 =:c_2.
\end{equation*}
\subsubsection{Naive bounds}
Now note that  $\overline{X_*}\otimes X_*$ and $X_*^T\otimes X_*^H$ are orthogonal matrices,
and that $D$ defined by \eqref{eq:Ddef} is a diagonal matrix whose largest element is the reciprocal gap, such
that
\[
\|D\|_2=\max_{i}|d_{i,i}| =\frac{1}{\delta}.
\]
By using this and the Cauchy-Schwartz inequality we obtain 
a straight-forward upper bound for $c$, 
\begin{equation}\label{eq:mnaive}
    c\leq \norm{T}_2 \norm{\overline{X}\otimes {X}}_2 \norm{D}_2 \norm{({X}^T\otimes {X}^H)}_2 \norm{L'}_2 \leq \frac{\norm{L'}_2}{\delta} := c_{\rm naive},
\end{equation}
where we dropped subscript $*$ in the eigenvector matrix $X_*$ for notational convenience. We can conclude from \eqref{eq:mnaive}  that a small gap implies a larger value of the upper bound $c_{naive}$, indicating slow convergence. This is consistent with the  well-known fact that problems with a small gap are more difficult to solve using the SCF iteration which is concluded in several convergence analysis works, e.g. \cite{Yang:2009:SCF}. Note that the bound \eqref{eq:mnaive} does not depend on the gap alone but also on $\norm{L'}_2$, which can be large and difficult to analyze. The matrix $L'$ depends on the action of the operator $\mathcal{L}$, which leads us to the pursuit of other bounds which may quantify this dependence in a way that is easier to interpret.
\subsubsection{Cycled permutation}
Different bounds can be derived by using the fact that
the spectral radius does not change
when we reverse the order of multiplication of matrices, i.e.,
$\rho(AB)=\rho(BA)$.
Therefore, from the definition of $J_P$ and
\[
c=\rho(J_P) = \rho(T(\overline{X}\otimes X)D({X}^T\otimes {X}^H)L'),
\]
we obtain variants based on cyclic permutation
\begin{subequations}\label{eq:cyclic1}
\begin{eqnarray}
  c&=&\rho((\overline{X}\otimes X)D({X}^T\otimes {X}^H)L'T)\\
  &=&\rho(D({X}^T\otimes {X}^H)L'T(\overline{X}\otimes X)).
\end{eqnarray}
\end{subequations}
Both equations in \eqref{eq:cyclic1} lead to the bound
\begin{equation*}
   c\le  \|D({X}^T\otimes {X}^H)L'T\|_2=:c_{2,a}.
\end{equation*}
The cyclic permutation can be continued such that
\begin{subequations}\label{eq:cyclic2}
\begin{eqnarray}
  c&=&\rho(({X}^T\otimes {X}^H)L'T(\overline{X}\otimes X)D)\\
  &=&\rho(L'T(\overline{X}\otimes X)D({X}^T\otimes {X}^H)).
\end{eqnarray}
\end{subequations}
Equation \eqref{eq:cyclic2} leads to the bound
\begin{equation}  \label{eq:c2b}
   c\le  \|L'T(\overline{X}\otimes X)D\|_2=:c_{2,b}.
\end{equation}
In the following we need the symmetrization operator formally
defined as 
\begin{equation}\label{eq:symmet}
S(X):=\sum_{j=1}^m\vech(X)_j\vechinv(e_j)
\end{equation}
or equivalently $S(L+D+R)=L+D+L^T$, where $L+D+R$ is the
decomposition into the lower triangular, diagonal and upper triangular matrices.
Using \eqref{eq:symmet}, we have the identity
\begin{eqnarray}\label{eq:lpident}
  L'\vech(X) &=& \sum_{j=1}^m\vect(\LLL(\vechinv(e_j)))\vech(X)_j  \\
  &=&\vect(\mathcal{L}(S(X))), \nonumber
\end{eqnarray}
due to the definition of $L'$ in \eqref{eq:Lprime} and the linearity of $\LLL$.
The columns of the matrix in \eqref{eq:c2b}
can be expressed as 
\begin{eqnarray*}
(L'T(\overline{X}\otimes X)D)_{:,j}&=&L'T\vect(x_\ell x_m^H)d_{j,j}\\
&=&L'\vech(x_\ell x_m^H)d_{j,j}
\end{eqnarray*}
where $j=n(m-1)+\ell$ and where we used \eqref{eq:vvech}
in the last step.
Hence, from \eqref{eq:lpident} we have
\begin{equation} \label{eq:c2b_columns}
(L'T(\overline{X}\otimes X)D)_{:,j}=\vect(\LLL(S(x_\ell x_m^H)))d_{j,j}.  
\end{equation}
The columns of the matrix inside the norm in \eqref{eq:c2b} is given by \eqref{eq:c2b_columns}. This formula can be interpreted as follows. We clearly see that the action of the linear operator applied to the outer products of eigenvectors $\LLL(S(x_\ell x_m^H))$ has significance. The weighting with $d_{j,j}$ implies that only pairs of eigenvectors of different occupancy are relevant (in this bound). This quantity is further described in the following section.
\subsection{Higher gaps}
We begin by decomposing the matrix obtained by cycled permutation in \eqref{eq:cyclic2} as follows,
\begin{multline} \label{eq:Dsep}
L'T(\overline{X}\otimes X)D({X}^T\otimes {X}^H)=\\
L'T(\overline{X}\otimes X)(D-D_1-\cdots-D_{2q})({X}^T\otimes {X}^H)
+L'T(\overline{X}\otimes X)(D_1+\cdots+D_{2q})({X}^T\otimes {X}^H)
\end{multline}
where $D_j$ are rank one diagonal matrices (and we take $2q$ terms for symmetry reasons). We have $q=p(n-p)$ unique gaps and they occur twice each on the diagonal of $D$. Hence, this decomposition reveals the dependence of the convergence on eigenvalue gaps other than just the smallest gap $\delta$. In the following theorem we quantify this dependence along with the dependence on the outer products of the eigenvectors $\LLL(S(x_\ell x_m^H))$ as introduced in the previous subsection.

In the formulation of the theorem, we use the set $\Omega_q\subset [1,n]\times [1,n]$ which contains the indices of $R$ that have entries corresponding to the $q$ smallest gaps. As a result, $\Omega_q$ contains $2q$ elements.
\begin{figure}[h]
  \begin{center}
    \scalebox{0.6}{\includegraphics{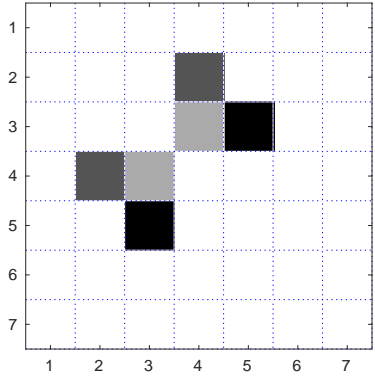}}
    \caption{
      Schematic illustration of elements of $\Omega_3$ as indices of $R$ for the real-valued problem in subsection \ref{subsec:dlap} with $n=7, p=3, \alpha = 10.0$.
    }
    \label{fig:schema}
  \end{center}
  
\end{figure}
In figure~\ref{fig:schema}, we visualize the elements of $\Omega_3$. The set $\Omega_3$ comprises the indices of the reciprocal gap matrix $R$ which correspond to these gaps, that is, $\Omega_3 = \{(4,3),(3,4),(4,2),(2,4),(5,3),(3,5)\}$.
\begin{theorem}[Higher gaps]\label{theo:hogaps}
  The convergence factor of the SCF-iteration is bounded by
  \begin{equation}  \label{eq:hgaps}    
    \rho(J_p)\le \frac{\|L'\|_2}{\delta_{q+1}}+
    \sum_{(\ell,m)\in\Omega_q}\frac{1}{|\lambda_\ell-\lambda_m|}\|\LLL(S(x_\ell x_m^H))\|_F := c_{gap,q}
  \end{equation}
     for any $q\in [0,p(n-p)]$, where $\delta_{p(n-p)+1}:=\infty$ and $\delta_1 = \delta$.
  \end{theorem}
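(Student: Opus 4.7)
\medskip
\noindent\textbf{Proof plan.} The starting point is to work with the cyclic-permutation form
\[
  \rho(J_P)=\rho\bigl(L'T(\overline{X}\otimes X)\,D\,({X}^T\otimes {X}^H)\bigr),
\]
bound the spectral radius by the spectral norm, and then drop the trailing unitary factor $({X}^T\otimes {X}^H)$ (Kronecker products of unitary matrices are unitary), which reduces the problem to bounding
\[
   \bigl\|L'T(\overline{X}\otimes X)\,D\bigr\|_2.
\]
The plan is to split $D$ as in \eqref{eq:Dsep}, namely $D=(D-D_1-\cdots-D_{2q})+(D_1+\cdots+D_{2q})$, where each $D_j$ is the rank-one diagonal matrix that carries one of the $2q$ entries of $D$ associated with the $q$ smallest gaps. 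Applying the triangle inequality then yields two groups of terms, which I would bound separately.

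For the ``diagonal remainder'' $D-D_1-\cdots-D_{2q}$: by construction of $\Omega_q$ and $D=\diag(\vect(R))$, the largest entry of what is left is $1/\delta_{q+1}$. Using $\|T\|_2=1$ (it is a 0/1 row-selection matrix), $\|\overline{X}\otimes X\|_2=1$, and submultiplicativity, this term is bounded by $\|L'\|_2/\delta_{q+1}$, which accounts for the first summand in \eqref{eq:hgaps}.

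For each rank-one piece $D_j$, with $j=n(m-1)+\ell$ corresponding to $(\ell,m)\in\Omega_q$: I would use that $(\overline{X}\otimes X)e_j=\overline{x_m}\otimes x_\ell=\vect(x_\ell x_m^H)$ and \eqref{eq:vvech} to get $T\vect(x_\ell x_m^H)=\vech(x_\ell x_m^H)$. Then the identity \eqref{eq:lpident}, already established from the definition \eqref{eq:Lprime} and linearity of $\LLL$, gives that the unique nonzero column of $L'T(\overline{X}\otimes X)D_j$ equals $d_{j,j}\,\vect(\LLL(S(x_\ell x_m^H)))$. Since $|d_{j,j}|=1/|\lambda_\ell-\lambda_m|$ and $\|\vect(\cdot)\|_2=\|\cdot\|_F$, the spectral norm of that rank-one matrix is exactly $\|\LLL(S(x_\ell x_m^H))\|_F/|\lambda_\ell-\lambda_m|$, and summing over the $2q$ indices $(\ell,m)\in\Omega_q$ gives the second summand in \eqref{eq:hgaps}.

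The one step requiring a little bookkeeping (and which I see as the main potential pitfall) is the indexing: one has to verify that the set of column indices $\{n(m-1)+\ell:(\ell,m)\in\Omega_q\}$ does indeed pick out the $2q$ largest-magnitude diagonal entries of $D$, i.e., those corresponding to the $q$ smallest gaps counted twice (from the symmetric $(\ell,m)$ and $(m,\ell)$ entries of $R$), so that the leftover diagonal has spectral norm exactly $1/\delta_{q+1}$. Once the indexing is done, everything collapses into the bound \eqref{eq:hgaps}. The edge cases $q=0$ (no rank-one corrections, $c_{\mathrm{gap},0}=\|L'\|_2/\delta$, recovering the naive bound) and $q=p(n-p)$ (the first summand vanishes by the convention $\delta_{p(n-p)+1}:=\infty$) are immediate from the construction.
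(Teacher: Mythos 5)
Your proposal is correct and takes essentially the same route as the paper: start from the cyclic-permutation form $\rho(L'T(\overline{X}\otimes X)D(X^T\otimes X^H))$, split $D$ into the $\Omega_q$ rank-one pieces plus remainder exactly as in \eqref{eq:Dsep}, apply the triangle inequality, bound the remainder term by $\|L'\|_2/\delta_{q+1}$, and evaluate each rank-one term via the identity \eqref{eq:c2b_columns} (equivalently your chain $(\overline{X}\otimes X)e_j=\vect(x_\ell x_m^H)$, $T\vect=\vech$, then \eqref{eq:lpident}). The indexing bookkeeping you flag is handled in the paper by defining $\Omega_q$ directly as the index set of the $q$ smallest gaps (each occurring twice by symmetry of $R$), which makes the remainder's largest entry $1/\delta_{q+1}$ by construction.
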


\begin{proof}
  For notational convenience, we express $D$ in terms of $R$ (defined in \eqref{eq:Rdef}),
  i.e.,
  \begin{eqnarray*}
    D&=&\diag(\vect(R))=\sum_{\ell,m}^n r_{\ell,m}\vect(e_\ell e_m^T)\vect(e_\ell e_m^T)^T   \\
    &=&\sum_{(\ell,m)\not\in\Omega_q} r_{\ell,m}\vect(e_\ell e_m^T)\vect(e_\ell e_m^T)^T+   \\
    &&\sum_{(\ell,m)\in\Omega_q} r_{\ell,m}\vect(e_\ell e_m^T)\vect(e_\ell e_m^T)^T.
  \end{eqnarray*}
  The idea of the proof is to take the last sum in this equation as $\sum_{j=1}^{2q} D_j$
  with the decomposition in \eqref{eq:Dsep}.
By the triangle inequality, we have
\begin{equation}  \label{eq:gapbound_first}
   c \leq \|L'T(\overline{X}\otimes X)(D-D_1-\cdots-D_q)\|
+\sum_{j=1}^{2q}\|L'T(\overline{X}\otimes X)D_j\|.
\end{equation}
The first term in \eqref{eq:gapbound_first} is of the form used in the naive bound \eqref{eq:mnaive}, except that the diagonal matrix
is modified by setting the contribution corresponding to the $q$ first gaps to zero. We obtain directly the first term in \eqref{eq:hgaps}, 
\[
\|L'T(\overline{X}\otimes X)(D-D_1-\cdots-D_{2q})\|\le
\frac{\|L'\|_2}{\delta_{q+1}}.
\]
The second term in \eqref{eq:gapbound_first} is 
\[
\sum_{j=1}^{2q}\|L'T(\overline{X}\otimes X)D_j\|=
\sum_{(\ell,m)\in\Omega_q} r_{\ell,m}\|L'T(\overline{X}\otimes X)\vect(e_\ell e_m^T)\vect(e_\ell e_m^T)^T\|.
\]
This can be simplified with the  identity \eqref{eq:c2b_columns} which implies that
\begin{subequations}\label{eq:gapbound_second}
\begin{eqnarray}
\|L'T(\overline{X}\otimes X)\vect(e_\ell e_m^T)\vect(e_\ell e_m^T)^T\|&=&\|\vect(\LLL(S(x_\ell x_m^H)))\vect(e_\ell e_m^T)^T\| \\ 
&=&\|\vect(\LLL(S(x_\ell x_m^H)))\| \\ 
&=&\|\LLL(S(x_\ell x_m^H))\|_F.
\end{eqnarray}
\end{subequations}
The last two equalities follow from the fact that the 
spectral norm of a matrix with one non-zero column, is the
two-norm of that column vector, which is the Frobenius
norm of the corresponding matrix.
The proof is concluded by combining \eqref{eq:gapbound_first}
with \eqref{eq:gapbound_second} and
noting that $r_{\ell,m}=\frac{1}{|\lambda_\ell-\lambda_m|}$.

\end{proof}
Theorem \ref{theo:hogaps} should be further interpreted as follows. The parameter $q$ is free and the theorem therefore provides us with a family of bounds parameterized by $q$. For example, $q=0$ gives us $\rho(J_P) \leq \frac{\norm{L'}_2}{\delta_1}$, which is the naive bound from \eqref{eq:mnaive}. For $q = 1$, we have
\begin{equation*}
\rho(J_p) \leq \frac{\norm{L'}_2}{\delta_2}+\frac{\norm{\mathcal{L}(S(x_px_{p+1}^H))}_F+\norm{\mathcal{L}(S(x_{p+1}x_p^H))}_F}{\delta_1}.
\end{equation*}
By induction, $q=k$ gives us a bound that is a function of the $k+1$ smallest gaps and the norm of the action of $\mathcal{L}$ on the outer products of eigenvector-pairs corresponding to the gap indices of the $k$ smallest gaps. 
\subsection{Illustrative example}\label{sec:acadexample}
In order to illustrate the insight provided by Theorem~\ref{theo:hogaps}, we provide an example showing how in certain situations, using a bound with a higher value of $q$ provides us tighter upper bounds. Consider the following problem parameterized by $\epsilon$.
\begin{equation}\label{eq:acadproblem}
\left(\begin{bmatrix}0 & \epsilon & 0\\ \epsilon & 1+d & \epsilon\\ 0 & \epsilon & 10\\\end{bmatrix} + \begin{bmatrix}1 & 0 & 0\\ 0 & 1 & 0\\ 0 & 0 & 100\\\end{bmatrix} \circ x_1x_1^H\right)X = X\Lambda.
\end{equation}
Let $d=0.16$. In the notation in equation \eqref{eq:ap}, we have,
\[
A_0 = \begin{bmatrix}0 & \epsilon & 0\\ \epsilon & 1+d & \epsilon\\ 0 & \epsilon & 10\\\end{bmatrix}\quad \LLL(P) = L\circ P = \begin{bmatrix}1 & 0 & 0\\ 0 & 1 & 0\\ 0 & 0 & 100\\\end{bmatrix} \circ P .
\]
First note that for $\epsilon = 0$, the solution is given by
\[
\Lambda(0) = \begin{bmatrix}\lambda_1& 0& 0\\0& \lambda_2& 0\\0& 0& \lambda_3\\\end{bmatrix} =\begin{bmatrix}1 & 0 & 0\\ 0 & 1+d & 0\\ 0 & 0 & 10\\\end{bmatrix},\quad X(0) = \begin{bmatrix}1 & 0 & 0\\ 0 &1 & 0\\ 0 & 0 & 1\\\end{bmatrix}.
\]%
\begin{figure}[h!]
  \centering
  \scalebox{1}{\includegraphics{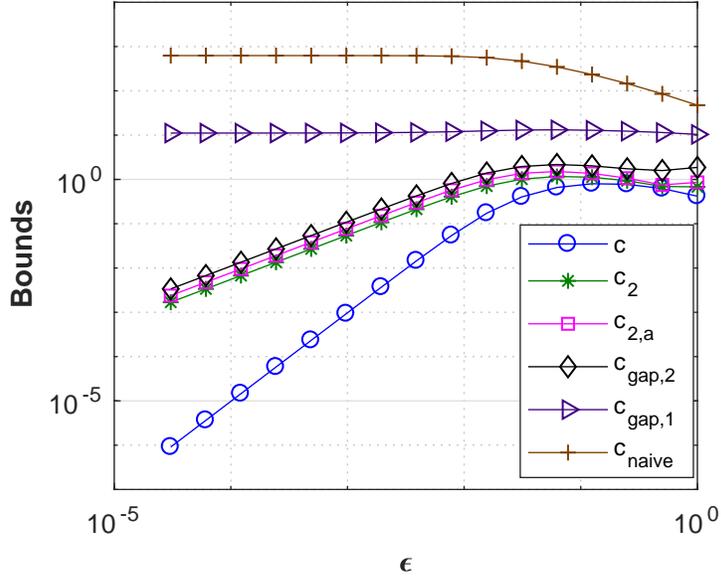}}
  \caption{Convergence factor and bounds for the illustrative example}
  \label{fig:acadbounds}
\end{figure}%
\begin{figure}[h!]
  \centering
  \scalebox{1}{\includegraphics{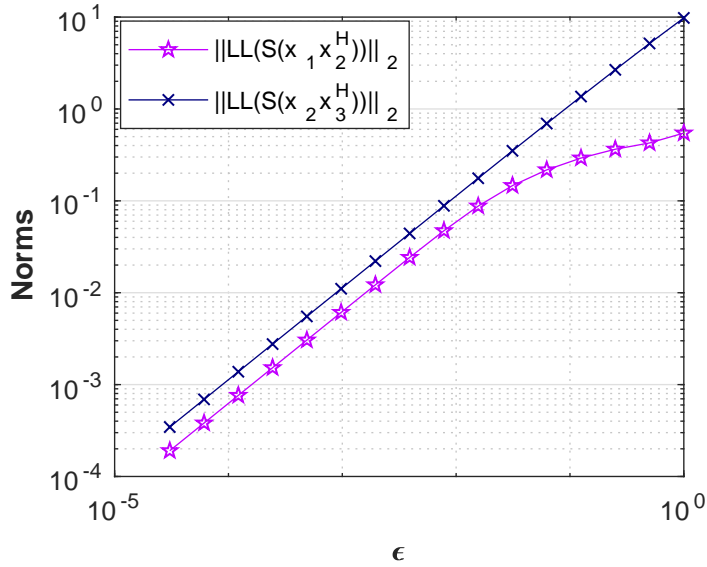}}
  \caption{Norm of $\mathcal{L}(x_1x_2^H)$ and $\mathcal{L}(x_2x_3^H)$}
  \label{fig:actionofl}
\end{figure}%
\begin{figure}[h!]
  \centering
  \scalebox{1}{\includegraphics{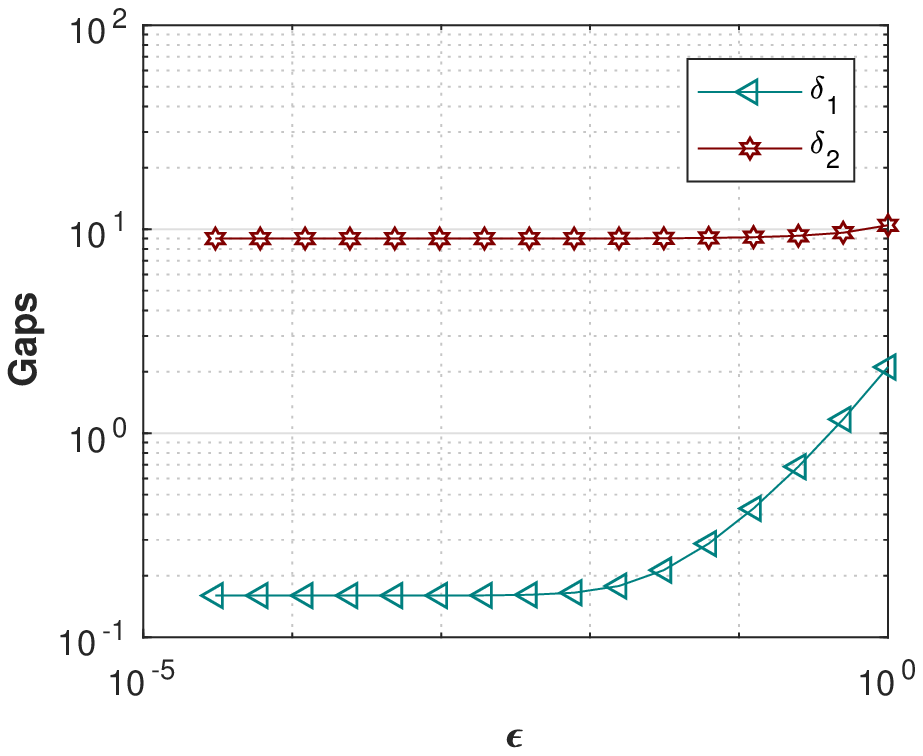}}
  \caption{Variance of $\delta_1$ and $\delta_2$ with $\epsilon$}
  \label{fig:gaps}
\end{figure}%
Varying $\epsilon$, solving the resulting problem instances and plotting the convergence factor and its bounds gives us figure~\ref{fig:acadbounds}. In figure~\ref{fig:actionofl}, we plot the norm of $L'$ along with the norm of action of $\mathcal{L}$ on the outer products of the eigenvectors ($x_1x_2^H$ and $ x_1x_3^H$) with $\epsilon$. In figure~\ref{fig:gaps}, we visualize how the gaps $\delta_1$ and $\delta_2$ vary with $\epsilon$. Note that we have $\norm{L'}_2 = 100$, which is independent of $\epsilon$.
As can be seen from figures ~\ref{fig:acadbounds} and ~\ref{fig:gaps}, $c_{naive}$ shows a direct inverse dependence on $\delta_1$, which is expected from \eqref{eq:mnaive} because $\norm{L'}$ is constant.

We can see from figures \ref{fig:acadbounds} and \ref{fig:actionofl} that $c_{naive}$ is not a good approximation to the convergence factor $c$, in comparison to our bounds $c_{gap,2}$, $c_2$ and $c_{2,a}$. The increase in $c$ as we increase $\epsilon$ (which means slower convergence for larger values of $\epsilon$) is not captured by $c_{naive}$ or $c_{gap,1}$ as they are essentially constant for small $\epsilon$. However, from figure~\ref{fig:acadbounds}, we see that the increase in $c$ coincides with the increase of the norm of action of $\LLL$ on the outer products of the eigenvectors ($x_1x_2^H$ and $ x_1x_3^H$), as seen in figure~\ref{fig:actionofl}. This behaviour is better captured in the formula for the upper bound $c_{gap,2}$,
\begin{align*}
c_{gap,2} &= \frac{\norm{\mathcal{L}(S(x_1x_{2}^H))}_F+\norm{\mathcal{L}(S(x_{2}x_1^H))}_F}{\delta_1}\\
          &+\frac{\norm{\mathcal{L}(S(x_1x_{3}^H))}_F+\norm{\mathcal{L}(S(x_{3}x_1^H))}_F}{\delta_2}.
\end{align*}
Although the bounds $c_{gap,2}$ , $c_2$ and $c_{2,a}$ are better approximations of $c$ as compared to $c_{naive}$, there is still a discrepancy in the slopes in figure~\ref{fig:acadbounds}, and the rate of increase of $c$ is faster than that of the bounds.
We now provide a more detailed analysis. First note that differentiating \eqref{eq:acadproblem} with respect to $\epsilon$, and setting $\epsilon = 0$, we obtain
\begin{equation}\label{eq:xp0}
X'(0) = \begin{bmatrix}0& \frac{1}{\lambda_2-\lambda_1}& 0\\ \frac{1}{\lambda_1-\lambda_2}& 0& \frac{1}{\lambda_3-\lambda_2}\\0& \frac{1}{\lambda_2-\lambda_3}& 0\end{bmatrix},\quad \Lambda'(0) = 0,\quad D'(0) = 0.
\end{equation} 
Let $J(\epsilon)$ denote the parameter dependent Jacobian evaluated at the solution,
\begin{equation*}
  J(\epsilon) = -T\left(X(\epsilon)\otimes X(\epsilon)\right)D(\epsilon)\left(X(\epsilon)^H\otimes X(\epsilon)^H\right)L'.
\end{equation*}
Differentiating w.r.t $\epsilon$, setting $\epsilon = 0$, and using $X(0) = I$,
\begin{equation}\label{eq:jprime0}
  \begin{split}
    J'(0) = -T\bigg(&\left(X'(0)\otimes I+I\otimes X'(0)\right)D(0)+D'(0)\\
    &- D(0)\left(X'(0) \otimes I+I\otimes X'(0)\right)\bigg)L'.\\
  \end{split}
\end{equation}
Using the formulae from \eqref{eq:xp0} and substituting into \eqref{eq:jprime0}, we get,
\begin{equation*}
J'(0) = \frac{1}{{(\lambda_2-\lambda_1)}^2}\begin{bmatrix}
        -e_2& 0& 0& e_2& 0& 0
        \end{bmatrix}.
\end{equation*}
From the structure of $J'(0)$ and the fact that all eigenvalues of $J'(0)$ are zero, we have, 
\begin{eqnarray}\label{eq:rhojp0}
\rho(J'(0)) &=& 0,\\
\norm{J'(0)}_2 &=& \frac{1}{{(\lambda_2-\lambda_1)}^2}\quad. \nonumber
\end{eqnarray}
This allows us to carry out a Taylor series analysis for $\rho\left(J(\epsilon)\right)$ and $\norm{J(\epsilon)}_2$ around 0,
\begin{equation}\label{eq:rhoj}
  c = \rho(J(\epsilon)) = \rho(J(0)+\epsilon J'(0)+\mathcal{O}(\epsilon^2)) \approx \epsilon\rho(J'(0))+\mathcal{O}(\epsilon^2) = \mathcal{O}(\epsilon^2).
\end{equation}
Similarly,
\begin{equation}\label{eq:normj}
  c_2 = \norm{J(\epsilon)}_2 = \epsilon\norm{J'(0)}_2+\mathcal{O}(\epsilon^2) = \frac{\epsilon}{{(\lambda_2-\lambda_1)}^2}+\mathcal{O}(\epsilon^2) = \mathcal{O}(\epsilon).
\end{equation}
Hence, from \eqref{eq:rhoj} and \eqref{eq:normj}, we expect $c$ to vary at a rate that is an order of magnitude faster than $c_2$ for very small values of $\epsilon$, which is exactly what we observe in figure~\ref{fig:acadbounds}. We clearly see from  \eqref{eq:rhojp0},\eqref{eq:rhoj} and \eqref{eq:normj} that this is because $J'(0)$ has zero eigenvalues (and hence zero spectral radius) but non-zero norm. This illustrates how the two-norm based bounds can overestimate $c$.
\section{Numerical examples}
\subsection{Discrete Laplacian example}\label{subsec:dlap}
In this subsection, we apply our theory to a minor variation of the problem type discussed in \cite[Section~5]{liu2014convergence}. In the context of this paper, this translates to 
\[
A_0 = \begin{bmatrix}
      2/h^2& -1/h^2+i/2h& 0& \ldots & 0\\
      -1/h^2-i/2h& 2/h^2& -1/h^2+i/2h& \ddots& \vdots \\
      0& -1/h^2-i/2h& 2/h^2& \ddots& 0\\
      \vdots& \ddots& \ddots& \ddots& -1+i/2h\\
      0& \ldots& 0& -1/h^2-i/2h& 2/h^2\\
      \end{bmatrix},
\]
which is the discretized 1D differential operator: $\dfrac{\partial^2}{\partial x^2}+i\dfrac{\partial}{\partial x}$. In a PDE setting, this would correspond to a diffusion term added with a complex convection term discretized with a central difference scheme with grid spacing $h$. We also have
\[
\mathcal{L}(P) = \alpha Diag\left(Re(A_0)^{-1}diag\left(P\right)\right).
\]
Note that $\mathcal{L}(\cdot)$ depends only on the diagonal of $P$. 
\begin{figure}
\subfigure[Convergence factor and bounds]{{\includegraphics[height=5.cm,width=6.2cm]{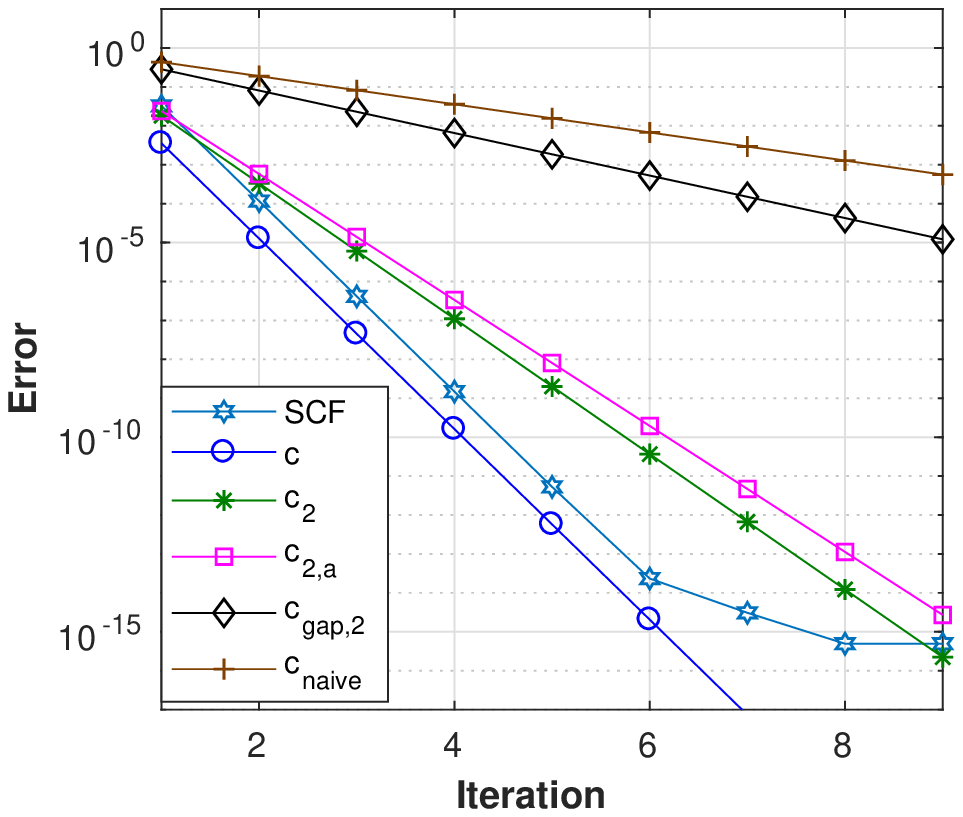}}}
\subfigure[Distribution of eigenvalues]{{\includegraphics[height=5.cm,width=6.2cm]{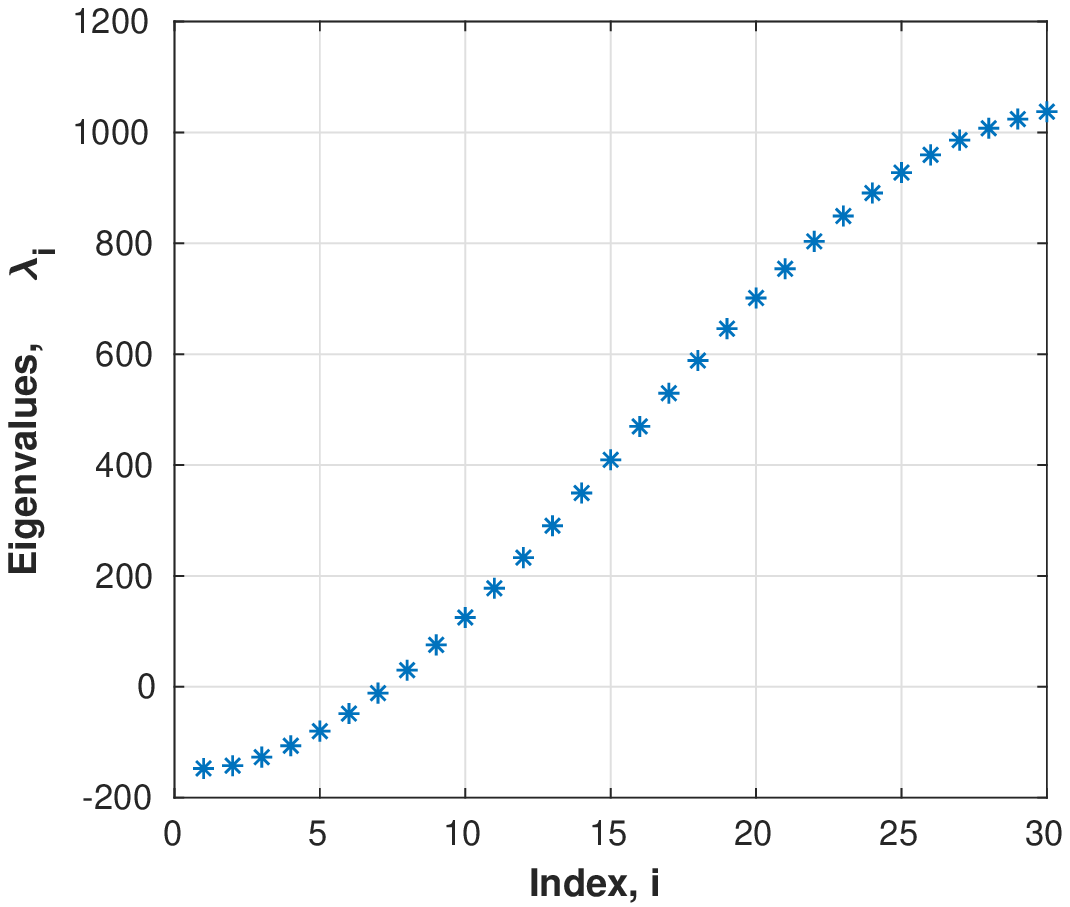}}}
\subfigure[Variance with $n$]{{\includegraphics[height=5.cm,width=6.2cm]{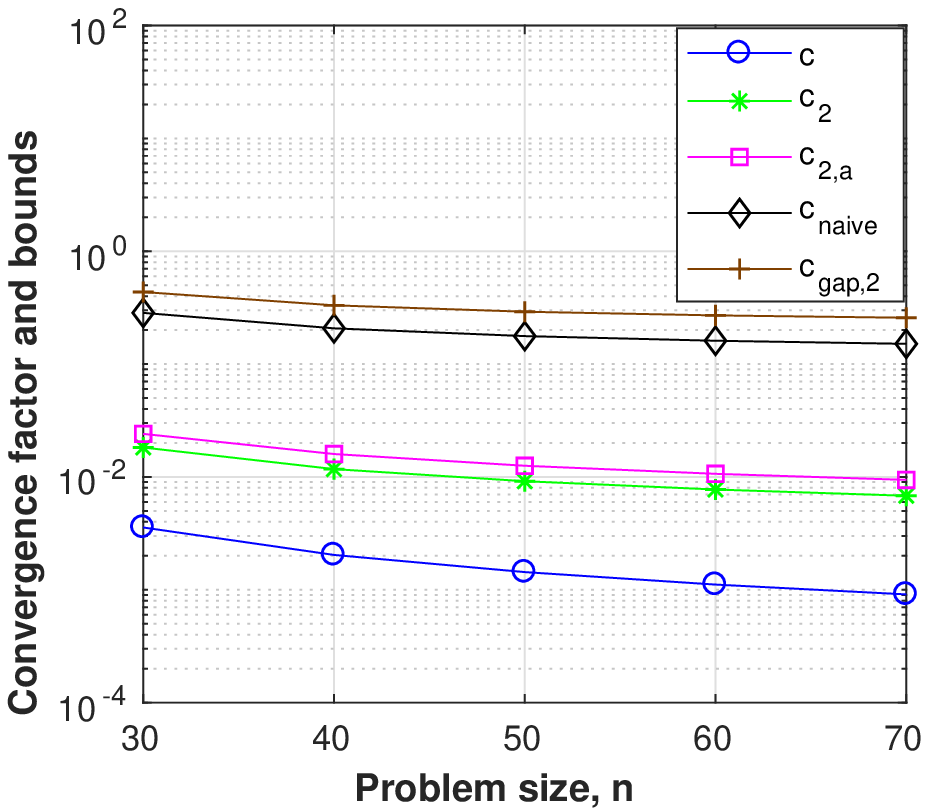}}}
\subfigure[Variance with $\alpha$]{{\includegraphics[height=5.cm,width=6.2cm]{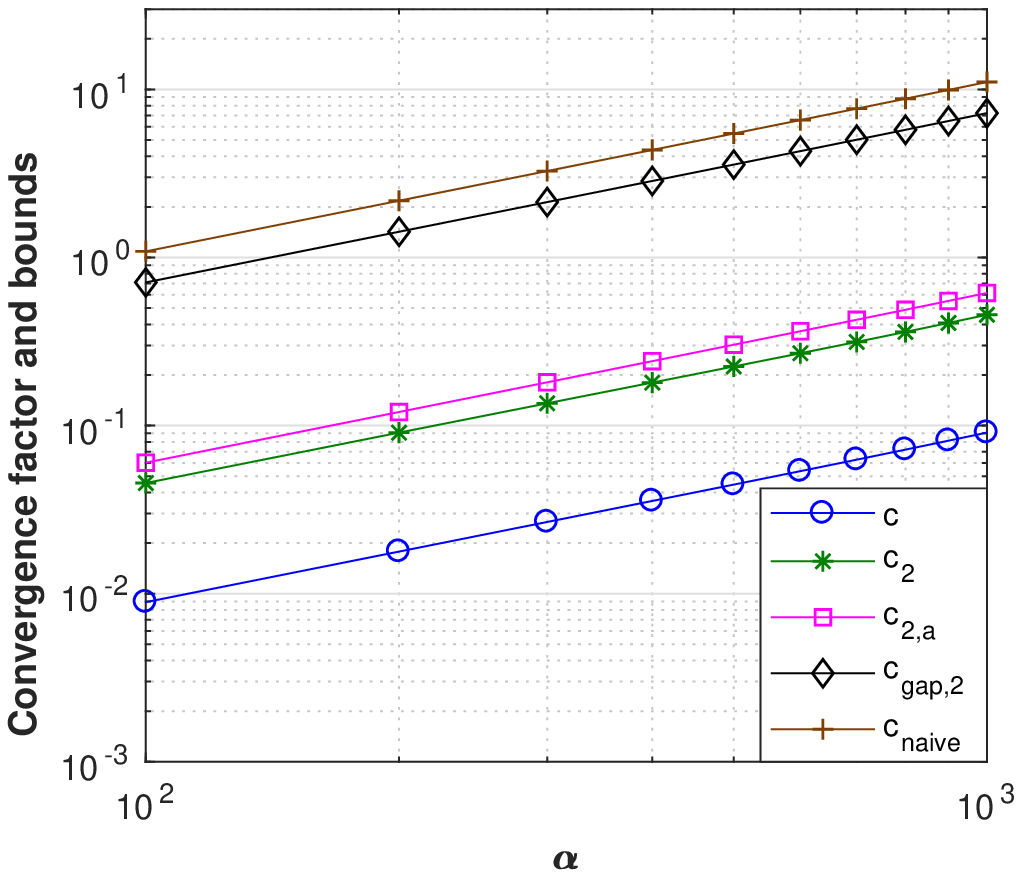}}}
\caption{Complex-valued problem for $n=30$\big((a),(b) and (d)\big), $p= 15, \alpha=40.0$\big((a),(b) and (c)\big)}
\label{fig:cproblem}
\end{figure}
Figure~\ref{fig:cproblem}(a) shows us that the predicted convergence rate $c$ agrees perfectly with SCF convergence history. The norm based bounds $c_2$ and $c_{naive}$ are slightly worse than the exact rate $c$. As expected, $c_{naive}$ is the least accurate upper bound, but $c_{gap,2}$ is only slightly better. As seen from figure~\ref{fig:cproblem}(b)  the gaps between the eigenvalues are not very well separated, that is, the higher gaps are not much larger than $\delta$. More precisely, $\delta_3$ is not much larger than $\delta_1$ and $\delta_2$. This explains why $c_{gap,2}$ is not a good approximation to the norm based bounds for this problem. Figure~\ref{fig:cproblem}(d) shows a linear increase in the value of the convergence factor and the upper bounds with $\alpha$, which is expected from the linear dependence of $L'$ on $\alpha$ and equation \eqref{eq:cyclic1}. From figure~\ref{fig:cproblem}(c), we also see that convergence becomes faster(that is $c$ decreases) with increase in problem size for a constant value of $\alpha$ and $p$.
To make a comparison of our upper bounds with the convergence factor derived from \cite[Theorem~4.2]{liu2014convergence}, we change the problem by setting
\[
A_0 = \begin{bmatrix}
      2/h^2& -1/h^2& 0& \ldots & 0\\
      -1/h^2& 2/h^2& -1/h^2& \ddots& \vdots \\
      0& -1/h^2& 2/h^2& \ddots& 0\\
      \vdots& \ddots& \ddots& \ddots& -1/h^2\\
      0& \ldots& 0& -1/h^2& 2/h^2\\
      \end{bmatrix}
\]
since the analysis in that paper is presented for real-valued problems. The operator $\mathcal{L}(\cdot)$ is the same as before. We plot the upper bound $c_{Liu} = \frac{2\alpha\sqrt{n}\norm{A_0^{-1}}_2}{\delta_1}$ along with the other upper bounds for this modified real-valued problem in figure~\ref{fig:realconv}. 
\begin{figure}[h!]
  \centering
  \scalebox{0.7}{\includegraphics{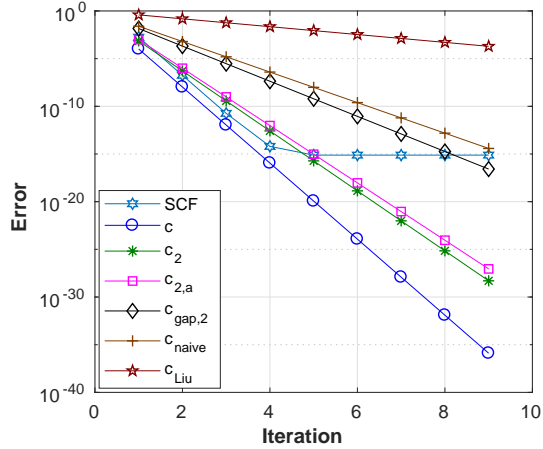}}
  \caption{Real-valued problem for $n = 60$, $\alpha = 5.0$, $p = 25$}
  \label{fig:realconv}
\end{figure}
Figure~\ref{fig:realconv} suggests to us that the upper bounds discussed in this paper are improvements over the upper bound in \cite{liu2014convergence}.
\subsection{Water molecule example}\label{subsec:watermolecule}
In this example, we apply the SCF iteration to a problem that originates from the modelling of a water molecule system. 
The discretization involves a restricted Hartree-Fock approximation with a set of $n=13$ basis functions and $p=5$.
For any nonlinear eigenvalue problem that results from a Hartree-Fock approximation, we have
\[
A(X_1X_1^H) = H_{core}+2G(R^{-1}X_1X_1^H{R^{-1}}^H)
\]
where $G(\cdot)$ is a linear operator and $H_{core}$ is a sum of two matrices that correspond to terms for kinetic energy and the nuclear-electron interaction energy. In our context, $\mathcal{L}(P) = 2G(R^{-1}P{R^{-1}}^H)$. Here, $R$ is a lower triangular matrix that results from a cholesky decomposition of the "overlap matrix". The overlap matrix is hermitian and obtained by computing integrals of products of basis functions, as explained in \cite[Section~2.4]{Rudberg}. For the purpose of reproducibility, we provide the coordinates of the nuclei of the Oxygen and Hydrogen atoms in the following table. Note that all data is in atomic units.
\begin{center}
\begin{tabular}{|c|c|c|c|c|}
\hline
Atom& Charge($\mathbf{e}$)& x($\mathbf{a_0}$)& y($\mathbf{a_0}$)& z($\mathbf{a_0}$)\\
\hline
O& 8.0& 0.0& 0.0& 0.0\\
\hline
H& 1.0& -1.809& 0.0& 0.0\\
\hline
H& 1.0& 0.453549& 1.751221& 0.0\\
\hline
\end{tabular}
\end{center}
The computation was performed using Ergo\cite{ERGO-JCTC-2011,ERGO-SOFTWAREX}, which is a software package for large-scale SCF calculations. The standard Gaussian basis set 3-21G was used and the starting guess for the density matrix was projected from the calculation using a smaller STO-3G basis set.
We plot the SCF convergence history and the exact convergence factor $c$. We have not plotted the other upper bounds that we derived because they overestimate $c$ by a large margin. Instead, based on the theory in Theorem~\ref{theo:hogaps} we investigate the bound that neglects certain terms such that 
\[
\widetilde{c_{2}} = \norm{T(\overline{X}\otimes X)D_1({X}^T\otimes {X}^H)L'}_2
\]
which is the spectral norm of a 2-rank approximation of the Jacobian $J_P$ (taking into account only the entries that contain $\delta_1$ in $D$). As we can see from figure ~\ref{fig:watemolecule}, the observed behaviour of SCF convergence agrees with that predicted by the exact value of the spectral radius, $c$. 
\begin{figure}[h!]
  \subfigure[Convergence factor and bounds]{{\includegraphics[height=5.cm,width=6.2cm]{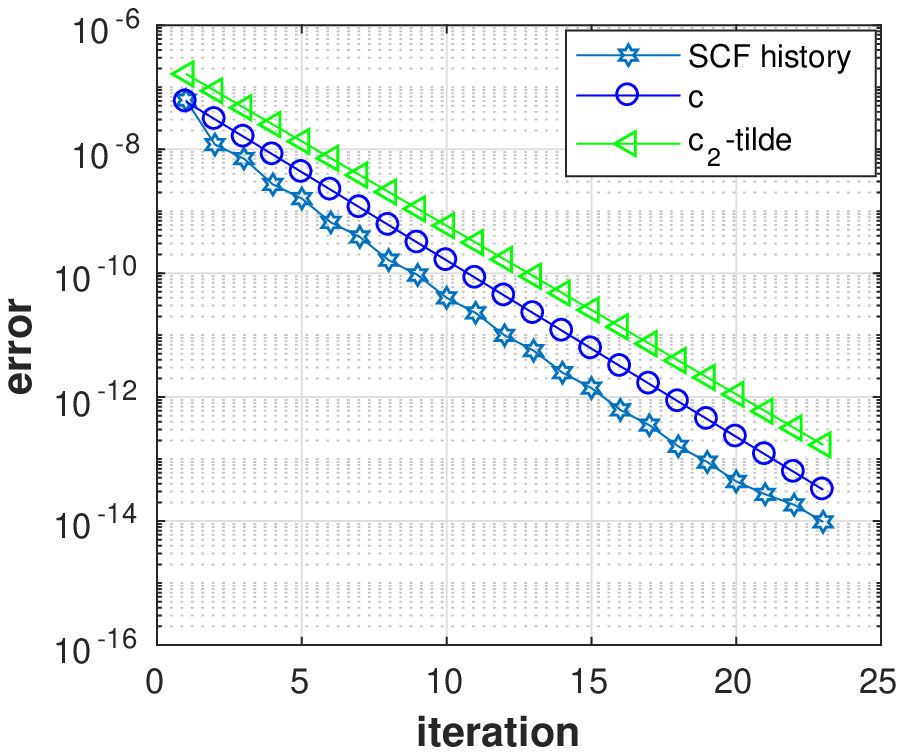}}}
  \subfigure[Distribution of eigenvalues]{{\includegraphics[height=5.cm,width=6.2cm]{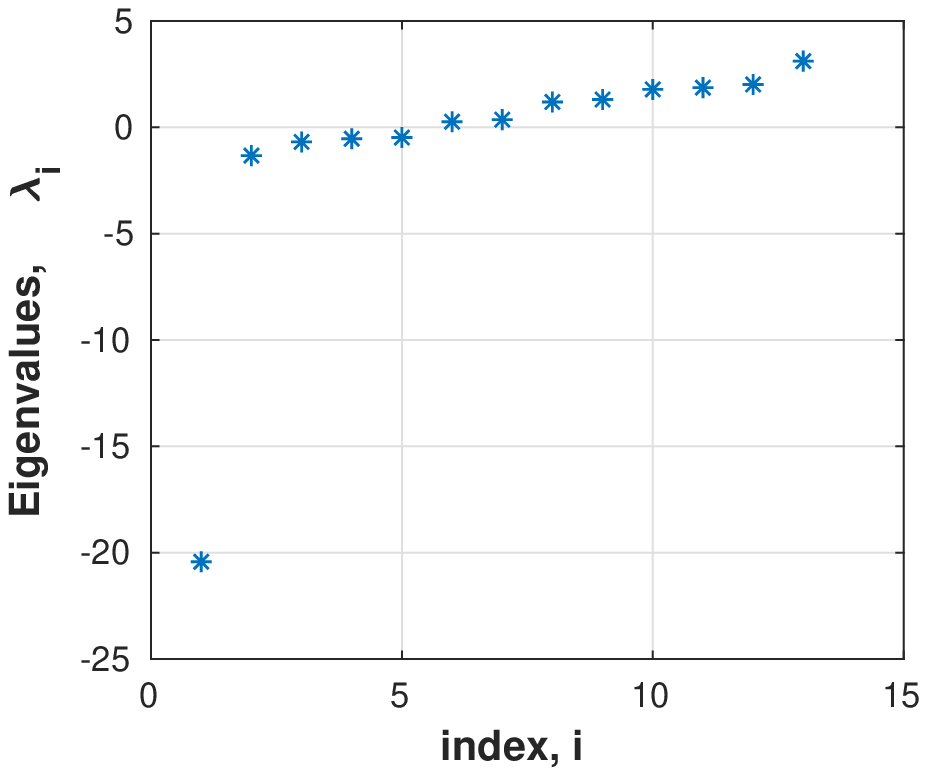}}}

  \caption{Water molecule problem with $n=13, p=5$}
  \label{fig:watemolecule}
\end{figure}
\section{Conclusions and outlook}
The SCF algorithm is an important algorithm in many fields. We have provided a new convergence characterization for the algorithm using a density matrix based analysis of a fixed point map. The upper bounds derived for the spectral radius of the Jacobian of the fixed point map illustrate how the convergence depends on the different problem parameters and physical properties. In particular, Theorem~\ref{theo:hogaps} provides a mathematical footing for studying how the gaps interact with the outer products of eigenvectors to affect the convergence properties.  This is a quantification of Stanton's observation in \cite[Section~IV]{Stanton:1981:CLOSEDSHELL}, where he points out that typically, divergence in SCF calculations is not due to a single very low energy excitation, but to the interaction of several moderately low excitations. The discussion of the illustrative example in section \ref{sec:acadexample} explains how when the Hessian has zero spectral radius but non-zero norm, an upper bound based on the interaction of higher gaps is needed to give a more accurate picture of the convergence behaviour. Finally, the application of our upper bounds to practical problems in subsections ~\ref{subsec:dlap} and ~\ref{subsec:watermolecule} reveal that our bounds are slightly better approximations to the convergence factor than the bounds that exist in previous literature. 
\bibliography{eliasbib}

\providecommand{\href}[2]{#2}
\providecommand{\arxiv}[1]{\href{http://arxiv.org/abs/#1}{arXiv:#1}}
\providecommand{\url}[1]{\texttt{#1}}
\providecommand{\urlprefix}{URL }
\begin{thebibliography}{10}

\bibitem{Bai_LV:2017}
\newblock Z.~Bai, D.~Lu and B.~Vandereycken,
\newblock Robust {Rayleigh} quotient minimization and nonlinear eigenvalue
  problems, preprint,
\newblock
  \urlprefix\url{http://web.cs.ucdavis.edu/~bai/publications/bailuvandereycken18.pdf}.

\bibitem{Cai:2017:EIGVEC}
\newblock Y.~Cai, L.-H. Zhang, Z.~Bai and R.-C. Li,
\newblock On an eigenvector-dependent nonlinear eigenvalue problem, preprint,
\newblock
  \urlprefix\url{http://web.cs.ucdavis.edu/~bai/publications/caizhangbaili17.pdf}.

\bibitem{Cances:2000:SCF}
\newblock E.~Canc\`es and C.~L. Bris,
\newblock On the convergence of {SCF} algorithms for the {Hartree-Fock}
  equations,
\newblock \emph{M2AN, Math. Model. Numer. Anal.}, \textbf{34} (2000),
  749–774,
\newblock \urlprefix\url{https://doi.org/10.1051/m2an:2000102}.

\bibitem{BOWLER:MIZAYAKI}
\newblock D.R.Bowler and T.Mizayaki,
\newblock $\mathcal{O}$($n$) methods in electronic structure calculations,
\newblock \emph{Rep. Prog. Phys.}, \textbf{75} (2012), 036503,
\newblock
  \urlprefix\url{http://iopscience.iop.org/article/10.1088/0034-4885/75/3/036503}.

\bibitem{Helgaker:Jorgensen:Olsen}
\newblock T.~Helgaker, P.~Jorgensen and J.~Olsen,
\newblock \emph{Molecular Electronic-Structure Theory},
\newblock John Wiley and Sons, 2000.

\bibitem{Henderson:vech}
\newblock H.~V. Henderson and S.~R. Searle,
\newblock Vec and vech operators for matrices, with some uses in jacobians and
  multivariate statistics,
\newblock \emph{Can J. Stat.}, \textbf{7} (1979), 65--81,
\newblock \urlprefix\url{http://dx.doi.org/10.2307/3315017}.

\bibitem{HIGHAMFOM}
\newblock N.~Higham,
\newblock \emph{Functions of Matrices},
\newblock Society for Industrial and Applied Mathematics, 2008,
\newblock
  \urlprefix\url{https://epubs.siam.org/doi/abs/10.1137/1.9780898717778}.

\bibitem{daleckiikrein}
\newblock J.L.Dalecki{\u i} and G.Kre{\u i}n,
\newblock Integration and differentiation of functions of hermitian operators
  and applications to the theory of perturbations,
\newblock \emph{Amer. Math. Soc. Trans.: Series 2}, \textbf{47} (1965), 1--30.

\bibitem{Levitt:2012:CONVERGENCE}
\newblock A.~Levitt,
\newblock Convergence of gradient-based algorithms for the {Hartree-Fock}
  equations,
\newblock \emph{ESAIM: Math. Model. Numer. Anal.}, \textbf{46} (2012),
  1321–1336,
\newblock \urlprefix\url{https://doi.org/10.1051/m2an/2012008}.

\bibitem{liu2014convergence}
\newblock X.~Liu, X.~Wang, Z.~Wen and Y.~Yuan,
\newblock On the convergence of the self-consistent field iteration in
  {Kohn-Sham} density functional theory,
\newblock \emph{SIAM J. Matrix Anal. Appl.}, \textbf{35} (2014), 546--558,
\newblock \urlprefix\url{https://doi.org/10.1137/130911032}.

\bibitem{liu2015analysis}
\newblock X.~Liu, Z.~Wen, X.~Wang, M.~Ulbrich and Y.~Yuan,
\newblock On the analysis of the discretized {Kohn-Sham} density functional
  theory,
\newblock \emph{SIAM J. Numer. Anal.}, \textbf{53} (2015), 1758--1785,
\newblock \urlprefix\url{https://doi.org/10.1137/140957962}.

\bibitem{McCull:SymMat}
\newblock C.~E. McCulloch,
\newblock Symmetric matrix derivatives with applications,
\newblock \emph{J. Amer. Stat. Assoc.}, \textbf{77} (1982), 679--682,
\newblock \urlprefix\url{https://doi.org/10.2307/2287736}.

\bibitem{Messiah1999}
\newblock A.~Messiah,
\newblock \emph{Quantum Mechanics},
\newblock Dover Publications, 1999.

\bibitem{ngosaadtrp}
\newblock T.~T. Ngo, M.~Bellalij and Y.~Saad,
\newblock The trace ratio optimization problem for dimensionality reduction,
\newblock \emph{SIAM J. Matrix Anal. Appl.}, \textbf{31} (2010), 2950--2971,
\newblock \urlprefix\url{https://doi.org/10.1137/090776603}.

\bibitem{Pulay:1979}
\newblock P.~Pulay,
\newblock Convergence acceleration of iterative sequences. the case of scf
  iteration,
\newblock \emph{Chem. Phys. Lett.}, \textbf{73} (1979), 393--398,
\newblock \urlprefix\url{https://doi.org/10.1016/0009-2614(80)80396-4}.

\bibitem{ROHWEDDER:2011}
\newblock T.~Rohwedder and R.~Schneider,
\newblock An analysis for the {DIIS} acceleration method used in quantum
  chemistry calculations,
\newblock \emph{J. Math. Chem.}, \textbf{49} (2011), 1889--1914.

\bibitem{Rudberg}
\newblock E.~Rudberg,
\newblock \emph{Quantum Chemistry for Large Scale Systems},
\newblock PhD thesis, Royal Institute of Technology, 2007,
\newblock
  \urlprefix\url{http://urn.kb.se/resolve?urn=urn:nbn:se:kth:diva-4561}.

\bibitem{ERGO-JCTC-2011}
\newblock E.~Rudberg, E.~H. Rubensson and P.~Sa{\l}ek,
\newblock {Kohn-Sham} density functional theory electronic structure
  calculations with linearly scaling computational time and memory usage,
\newblock \emph{J. Chem. Theory Comput.}, \textbf{7} (2011), 340--350,
\newblock \urlprefix\url{https://doi.org/10.1021/ct100611z}.

\bibitem{ERGO-SOFTWAREX}
\newblock E.~Rudberg, E.~H. Rubensson, P.~Sa{\l}ek and A.~Kruchinina,
\newblock Ergo: An open-source program for linear-scaling electronic structure
  calculations,
\newblock \emph{SoftwareX}, \textbf{7} (2018), 107 -- 111,
\newblock \urlprefix\url{https://doi.org/10.1016/j.softx.2018.03.005}.

\bibitem{Saad:2010:ELECSTRUCT}
\newblock Y.~Saad, J.~T. Chelikowsky and S.~M. Shontz,
\newblock Numerical methods for electronic structure calculations of materials,
\newblock \emph{SIAM Rev.}, \textbf{52} (2010), 3--54,
\newblock \urlprefix\url{"https://epubs.siam.org/doi/pdf/10.1137/060651653"}.

\bibitem{Stanton:1981:CLOSEDSHELL}
\newblock R.~E. Stanton,
\newblock Intrinsic convergence in closed-shell {SCF} calculations. a general
  criterion,
\newblock \emph{J. Chem. Phys.}, \textbf{75} (1981), 5416--5422,
\newblock \urlprefix\url{https://doi.org/10.1063/1.441942}.

\bibitem{Szabo:1996:QC}
\newblock A.~Szabo and N.~S. Ostlund,
\newblock \emph{Modern Quantum Chemistry: Introduction to Advanced Electronic
  Structure Theory},
\newblock Dover Publications, 1996.

\bibitem{Yang:2009:SCF}
\newblock C.~Yang, W.~Gao and J.~C. Meza,
\newblock On the convergence of the self-consistent field iteration for a class
  of nonlinear eigenvalue problems,
\newblock \emph{SIAM J. Matrix Anal. Appl.}, \textbf{30} (2009), 1773--1788.

\bibitem{zhangliaotrp}
\newblock L.-H. Zhang, L.-Z. Liao and M.~K. Ng,
\newblock Fast algorithms for the generalized foley–sammon discriminant
  analysis,
\newblock \emph{SIAM J. Matrix Anal. Appl.}, \textbf{31} (2010), 1584--1605,
\newblock \urlprefix\url{https://doi.org/10.1137/080720863}.

\bibitem{zhanganotetrp}
\newblock L.-H. Zhang, W.~H. Yang and L.-Z. Liao,
\newblock A note on the trace quotient problem,
\newblock \emph{Opt. Lett.}, \textbf{8} (2014), 1637--1645,
\newblock \urlprefix\url{https://doi.org/10.1007/s11590-013-0680-z}.

\end{thebibliography}
\bibliographystyle{AIMS}
\medskip

\end{document}